\documentclass[11pt,twoside]{amsart}
\usepackage{amsmath, amsthm, amscd, amsfonts, amssymb, graphicx, color}
\usepackage[bookmarksnumbered, plainpages]{hyperref}
\allowdisplaybreaks
\addtolength{\topmargin}{-1.5cm}
\linespread {1.3}
\textwidth 17cm 
\textheight 23cm
\addtolength{\hoffset}{-0.3cm}
\oddsidemargin 0cm 
\evensidemargin 0cm
\setcounter{page}{1}
\usepackage{mathtools}
\usepackage{faktor}

\DeclareMathOperator{\supp}{supp}
\DeclareMathOperator{\spec}{Spec}
\DeclareMathOperator{\im}{Im}
\DeclareMathOperator{\Ker}{Ker}
\DeclareMathOperator{\Hom}{Hom}
\DeclareMathOperator\SL{SL}
\renewcommand{\phi}{\varphi}
\renewcommand*{\th}{\textsuperscript{th}}

\DeclarePairedDelimiterX\hook[2]{\left[}{\right]}{#1 : #2}
\newcommand*{\field}[1]{\mathbf{#1}}
\newcommand*{\R}{\field{R}}
\newcommand*{\Z}{\field{Z}}


\newtheorem{thm}{Theorem}[section]
\newtheorem*{mainthm}{Main Theorem}

\newtheorem{defn}[thm]{Definition}

\numberwithin{equation}{section}
\def\pn{\par\noindent}


\begin{document}

\title{A Cheeger-Buser-type inequality on CW complexes}
\author{Grégoire Schneeberger}

\thanks{{\scriptsize
Keywords: Cheeger-Buser inequality, Boundary expansion, CW complexes.\\
}}
\maketitle


\begin{abstract}  
We extend the definition of boundary expansion to CW complexes and prove a Cheeger-Buser-type relation between the spectral gap of the Laplacian and the boundary expansion of an orientable CW complex.
\end{abstract}

\vskip 0.2 true cm


\pagestyle{myheadings}
%


\bigskip
\bigskip


\section{\bf Introduction}
\vskip 0.4 true cm
The expander graphs have been a prolific field of research in the last four decades (see for example \cite{Lubotzky2012} for an excellent survey). For a graph $X$ with a vertex set $V$ the classical expansion constant (or \emph{Cheeger constant}) is defined by 
\begin{equation*}
h(X)\coloneqq \min \left\{ \frac{|\partial A|}{\min\left\{|A|, |A^c|\right\}} : \emptyset \subsetneq A \subsetneq V \right\}
\end{equation*}
where $\partial A$ is the set of edges with one vertex in $A$ and the other in $A^c$. A central result of this field is the Cheeger-Buser inequality, which describes the relation between the expansion and the spectrum of the Laplacian.
\begin{thm}[Cheeger-Buser inequality]
Let $X$ be a connected graph and $\lambda$ the first non-trivial eigenvalue of the Laplacian, then
\begin{equation*}
\frac{\lambda}{2} \leq h(X) \leq \sqrt{2\lambda d}
\end{equation*}
where $d$ is the maximal degree of a vertex.
\end{thm}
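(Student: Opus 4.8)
The plan is to prove the two inequalities separately: the lower bound $\lambda \leq 2h(G)$ is an easy consequence of the variational characterization of $\lambda$, while the upper bound $h(G) \leq \sqrt{2\lambda d}$ requires a more delicate co-area argument. Throughout I would use that, since $G$ is connected, the Laplacian $L = D - A$ has $0$ as a simple eigenvalue with constant eigenvector, so that $\lambda = \min\{ \langle Lf,f\rangle/\langle f,f\rangle : f \perp \mathbf{1},\ f \neq 0\}$, where $\langle Lf,f\rangle = \sum_{\{u,v\}\in E}(f(u)-f(v))^2$.

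For the lower bound, I would take a set $A$ attaining $h(G)$ with $|A|\leq |A^c|$ and feed the test vector $g = \mathbf{1}_A - (|A|/|V|)\mathbf{1}$, which is orthogonal to $\mathbf{1}$, into this Rayleigh quotient. Since constant shifts cancel in differences, the numerator is exactly $|\partial A|$, while the denominator evaluates to $|A|\,|A^c|/|V| \geq |A|/2$. This gives $\lambda \leq 2|\partial A|/\min\{|A|,|A^c|\}$, hence $\lambda \leq 2h(G)$.

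The bulk of the work is the upper bound. Starting from an eigenfunction $f$ for $\lambda$, I would first reduce to a nonnegative function $g$ with $|\supp(g)| \leq |V|/2$ and Rayleigh quotient at most $\lambda$: shifting $f$ by a median $m$ and splitting $f-m = g^+ - g^-$ into positive and negative parts, one checks edge-by-edge that the Dirichlet energy does not increase under this splitting (the only nontrivial case, an edge crossing the median, uses $a^2+b^2 \leq (a+b)^2$ for $a,b\geq 0$), while $\sum_v (f-m)(v)^2 \geq \sum_v f(v)^2$ because the mean, which is $0$ here by orthogonality to $\mathbf{1}$, minimizes the sum of squared deviations. The mediant inequality then forces one of $g^\pm$ to have Rayleigh quotient $\leq \lambda$, and the choice of $m$ guarantees its support meets at most half the vertices. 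The heart of the estimate is next the co-area formula applied to the super-level sets $A_t = \{v : g(v)^2 > t\}$: integrating $|\partial A_t|$ over $t$ reconstructs $\sum_{\{u,v\}}|g(u)^2 - g(v)^2|$, and since every $A_t \subseteq \supp(g)$ satisfies $|\partial A_t| \geq h(G)\,|A_t|$, one obtains $\sum_{\{u,v\}}|g(u)^2 - g(v)^2| \geq h(G)\sum_v g(v)^2$.

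Finally I would bound the same quantity from above by Cauchy--Schwarz, factoring $|g(u)^2-g(v)^2| = |g(u)-g(v)|\,|g(u)+g(v)|$ and using $(g(u)+g(v))^2 \leq 2(g(u)^2+g(v)^2)$ together with $\sum_v \deg(v)g(v)^2 \leq d\sum_v g(v)^2$; this produces $\sqrt{\lambda}\cdot\sqrt{2d}\,\sum_v g(v)^2$, and dividing by $\sum_v g(v)^2 > 0$ yields $h(G)\leq\sqrt{2\lambda d}$. I expect the main obstacle to be the reduction step, namely arranging a nonnegative test function supported on at most half the vertices whose Rayleigh quotient is still controlled by $\lambda$, since this is where orthogonality to constants, the median truncation, and the edge-by-edge energy comparison must be combined correctly; by contrast, the co-area identity and the Cauchy--Schwarz estimate are then essentially mechanical. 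This graph argument is also the template I would later adapt to the CW setting.
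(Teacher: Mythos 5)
Your proposal is correct: both the centering argument for $\lambda \leq 2h(G)$ and the median--splitting--co-area--Cauchy--Schwarz chain for $h(G) \leq \sqrt{2\lambda d}$ are sound, and each step you outline checks out (the mediant inequality, the edge-by-edge energy comparison, and the super-level-set estimate all work as you describe). Be aware, however, that the paper itself never proves this statement; it is quoted as classical background, and what the paper actually proves is the CW generalization $\lambda_d(X) \leq h_d(X) \leq \sqrt{2m\lambda_d(X)}$. Your argument is essentially the template that proof follows, so the comparison is still instructive. For the upper bound the paper does what you do, except the co-area step is discretized: the $d$-cells are ordered by eigenfunction value, the cut counts $|C_i|$ play the role of your $|\partial A_t|$, and the identity $\sum_{e_i^d \sim e_j^d} |f(e_i^d)^2 - f(e_j^d)^2| = \sum_i \left(f(e_{i+1}^d)^2 - f(e_i^d)^2\right)|C_i|$ is exactly your co-area formula; Cauchy--Schwarz and the degree bound $m$ then appear verbatim. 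Where the settings genuinely diverge is in your reduction step, the one you rightly flag as delicate: it is forced by the graph normalization $\min\{|A|,|A^c|\}$ and by orthogonality to constants, and it is what produces the factor $2$ in $\lambda/2 \leq h(G)$. In the paper's boundary-expansion setting the normalization is instead modulo $B_d$, so no median trick is needed: the paper uses orientability to choose the eigenfunction nonnegative, pads the boundary $\partial X$ with extra cells carrying $f=0$, and, for the easy direction, lifts the $\F_2$-minimizer directly to a real cochain, which is why its lower bound $\lambda_d \leq h_d$ carries no factor $2$.
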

For more details see \cite[Theorem 2.4]{Hoory2006}.

In recent years, theories for expansion of higher dimensional simplicial complexes have emerged. The combinatorial definitions allow generalizations of the Cheeger-Buser inequality, see \cite{Golubev2019,Gundert2015,Parzanchevski2016}. Other results, like Expander Mixing Lemma or generalization of Alon-Boppana theorem, can be proved using this formalism, see \cite{Parzanchevski2013}. We can also use homology and cohomology with coefficients in $\Z/2\Z$ to define boundary expansion, see \cite{Steenbergen2014}, and coboundary expansion, see \cite{Dotterrer2013,Gromov2010,Gundert2012,Linial2006,Meshulam2009,Steenbergen2014}. A Cheeger-Buser-type inequality is proved for boundary expansion in \cite{Steenbergen2014}. The coboundary expansion has the advantage that it coincides with the standard Cheeger constant in the one dimensional case, but the Cheeger-Buser inequality does not stay true in higher dimension, see \cite{Gundert2012,Steenbergen2014} for counterexamples. Nevertheless there exist some indications that suggest a connection between these two notions, particularly for the Cheeger's part (the upper bound) which holds for Riemannian manifolds \cite{Cheeger1970}. 

Recall that one of the first explicit constructions of expander graphs used Cayley graphs of finite quotients of a group with Kazhdan's Property (T), see \cite[Chapter 3]{Lubotzky2012} and \cite[Chapter 6]{Bekka2008}. There exist higher dimensional objects which can be associated to groups in the same spirit, as, for example, Cayley complexes \cite[Chapter 3]{Lyndon2001} or presentation complexes \cite{Hatcher2002}. For example, the group $\SL_3(\Z)$ has Property (T) and any family of its finite quotients will give an expanding family of Cayley graphs. One can wonder whether the corresponding Cayley complexes are also expanding. More generally, it would be interesting to establish which properties of the group may imply high-dimensional expansion in its finite quotients. One technical issue that has to be addressed while proceeding with this program is that high dimensional expansion has been mainly defined and studied for simplicial complexes, while the higher-dimensional objects naturally associated to groups are typically CW-complexes. Working out the formalism of high dimensional expansion for CW complexes is the aim of the present note.

We will begin by recalling some classical definitions about CW complexes, groups of cochains with coefficients in an abelian group $G$, Laplacians and their spectra.  Then, considering cochains with coefficients in $\Z/2\Z$, we will introduce $h_n(X)$, the $n$\th{} boundary expansion constant for every integer $n$ and we will prove a Cheeger-Buser-type inequality in the same spirit as the original result when $n$ equal to the dimension of the complex. This Theorem generalizes \cite{Steenbergen2014}. 

\begin{mainthm} \label{orientable_lower_bound}
Let X be a regular CW complex of dimension $d$ and $\lambda_d$ the smallest non trivial eigenvalue of the d\th{} lower Laplacian, then 
\begin{enumerate}
\item If $X$ is orientable 
\begin{equation*}
\lambda_d(X) \leq h_d(X).
\end{equation*}
\item If the maximal degree of a $(d-1)$-cell is 2, then
\begin{equation*}
h_d(X) \leq \sqrt{2 m \lambda_d}
\end{equation*}
where $m = \max \left\{ \sum_\mu \left| [e_\lambda^d : e_\mu^{d-1}]\right| : e_\lambda^d \in X^d \right\}$.
\end{enumerate}
\end{mainthm}

%
%
%
\section{\bf Definitions}
\subsection{CW Complexes}
We will begin by fixing some definitions and notations about CW complexes that we will use in the following. All the details can be found in \cite{Massey1980}. 

A \emph{CW complex} $X$ is a topological space obtained inductively by gluing euclidean balls, called \emph{cells}, via continuous maps called \emph{attaching maps}. In what follows, all complexes will be regular, which means that the attaching maps are homeomorphisms on their images. The \emph{$n$-skeleton}, denoted by $X^n$, is the set of all the cells of dimension $n$ which are called the \emph{$n$-cells}. The \emph{dimension} of $X$ is the maximal dimension of a cell. 
%
%
\subsection{Cohomology with coefficients}

We will now define the cohomology groups with coefficients associated to a CW complex. All the details of this classic construction can be found in \cite{Massey1980}. 

The group of \emph{$n$-chains} $C_n(X)$ is the free abelian group generated by the $n$-cells:
\begin{equation*}
C_n(X) = \bigoplus_{e_\lambda^{n} \in X^n} \Z.
\end{equation*}

By using classical homology tools, that we will not detail here, these groups can be provided with a structure of chain complex using boundary operators ${\partial_n : C_{n}(X) \rightarrow C_{n-1}(X)}$. 
In each infinite cyclic summand in the direct sum above, one can choose between the cyclic generator and its inverse: $b_\lambda^n$ or $\bar{b}_\lambda^n$. This choice defines the orientation of the n-cell $e_\lambda^n$. The set $\{b_\lambda^n\}_\lambda$ forms a basis of $C_n(X)$. The boundary operator is completely determined by the values on basis elements:
\begin{equation*}
\partial_n(b_\lambda^n)=\sum_\mu \left[b_\lambda^n : b_\mu^{n-1}\right] b_\mu^{n-1}
\end{equation*}
where the coefficients $[b_\lambda^n : b_\mu^{n-1}]$ are integers called the \emph{incidence numbers} of the cells $e_\lambda^n$ and $e_\mu^{n-1}$ with respect to the chosen orientations. This incidence number can be thought intuitively as the number of times an $n-1$ cell appears in the boundary of an $n$ cell, the sign depending of the consistency of the chosen orientations. The \emph{degree} of an oriented cell $b_\lambda^n$ is the sum $\sum_\mu \left|[b_\mu^{n+1} : b_\lambda^n]\right|$.

As the gluing functions are homeomorphisms, the incidence numbers take values in $\{-1,0,1\}$. Two oriented cells $b_{\lambda_1}^n$ and $b_{\lambda_2}^n$ that have a common $(n-1)$-cell $b_\mu^{n-1}$ in their boundary are either \emph{dissimilarly oriented} if $[b_{\lambda_1}^n : b_\mu^{n-1}]=[b_{\lambda_2}^n : b_\mu^{n-1}]$ or \emph{similarly oriented} if $[b_{\lambda_1}^n : b_\mu^{n-1}] \neq [b_{\lambda_2}^n : b_\mu^{n-1}]$. If there exists an orientation on a $d$-dimensional CW complex such that all the $d$-cells are similarly oriented, $X$ is said to be \emph{orientable}. 

The coboundary operator is defined using these incidence numbers :
\begin{align*}
\delta_n : C_n(X) &\rightarrow C_{n+1}(X)	\\
b_\lambda^n	& \mapsto \sum_\mu [b_\mu^{n+1} : b_\lambda^n]b_\mu^{n+1}.
\end{align*}
We will often omit the indices of the operators.

\begin{defn}
Let $G$ be an abelian group and $X$ be a CW complex. The $n$-cochains group with coefficients in $G$ is the group of homomorphisms between the $n$-chains and $G$,
\begin{equation*}
C^n(X:G)\coloneqq \Hom(C_n(X),G).
\end{equation*}
\end{defn}
It follows from the definition that $f(\bar{b}_\lambda^n)=-f(b_\lambda^n)$ for all $f \in C^n(X:G)$ and $n \geq 1$. We define operators, also denoted by $\partial_n$ and $\delta_n$, between the cochains groups:
\begin{align*}
\partial_n f(e_\mu^{n-1}) =f(\delta_n (e_\mu^{n-1})) = \sum_\lambda [e_\lambda^n : e_\mu^{n-1}]f(e_\lambda^{n})	\\*
\delta_n f (e_\mu^{n+1})=f(\partial_n (e_\mu^{n+1})) = \sum_\lambda [e_\mu^{n+1} : e_\lambda^{n}]f(e_\lambda^{n})
\end{align*}

It is sometimes more convenient to add $(-1)$-chains consisting only of the empty set and operators
\begin{equation*}
\partial_0(\sum_\lambda g_\lambda e_\lambda^0)=\sum_\lambda g_\lambda \, \textnormal{ and } \, \delta_{-1}(g)=\sum_\lambda g e_\lambda^0.
\end{equation*}
The following subgroups will be used in the following: 
\begin{eqnarray*}
B_n \coloneqq \im \partial_{n+1}	\quad & Z_n \coloneqq \Ker \partial_n\\
\end{eqnarray*}
\subsection{Laplacians and eigenvalues}
Consider the case $G = \R$. The cochains $C^n(X:\R)$ can be turned into real Hilbert spaces using 
\begin{equation*}
\langle f,g \rangle = \sum_\lambda f(e_\lambda^n) g(e_\lambda^n).
\end{equation*}
In this case, $\partial$ and $\delta$ are adjoint operators. Combining them, we define the \emph{n\th{} lower Laplacian}, 
\begin{equation*}
\Delta_n^-=\delta_{n-1} \partial_{n}.
\end{equation*}

It can be noted that the elements of $B_n$ are in the kernel of $\Delta_n^-$. Indeed, if $f$ is in $B_n$, there exists $g$ in $C^{n+1}(X:\R)$ such that $f =  \partial_{n+1}g$ and then $\Delta^-_n f = \delta_{n-1} \partial_n f = \delta_{n-1} \partial_n \partial_{n+1} g =0$. This part will be called the \emph{trivial} part of the spectrum and we will be interested in the smallest eigenvalue on the other parts.
\begin{defn}
The smallest non trivial eigenvalue of $\Delta_n^-$, denoted by $\lambda_n$, is defined as
\begin{equation*}
\lambda_n = \min \spec \Delta_n^-|_{B_n^{\perp}} .
\end{equation*}
\end{defn}

It can be computed using Rayleigh's quotients:
\begin{align*}
\lambda_n & :=\min \left\{\frac{ \| \partial_n f \|^2}{\|f\|^2} : f \in B_n^\perp, f \neq 0 \right\} =\min \left\{\frac{ \| \partial_n f \|^2}{\|f + B_n|^2} : f \not\in B_n \right\}
\end{align*}
where $\|f + B_n \|=\min \{ \|f + g \| : g \in B_n \}$.

\subsection{Boundary expansion }

Let us consider $G = \Z/2\Z$ to define the notion of boundary expansion for CW complexes. The cochains groups can be endowed with the Hamming's norm: 
\begin{equation*}
\| \alpha \| = | \supp \alpha |
\end{equation*}
for $\alpha \in C^n(X:\Z/2\Z)$.
We can define the following notion of expansion for CW complexes.
\begin{defn}
Let $X$ be a CW complex. The $n$\th{} boundary expansion constant of $X$ is:
\begin{equation*}
h_n(X)\coloneqq \min\left\{ \frac{\|\partial \alpha \|}{\|\alpha + B_n\|} : \alpha \in C^n(X:\Z/2\Z) \setminus B_n \right\}.
\end{equation*}
where $\|\alpha + B_n \|=\min \{ \|\alpha + \beta \| : \beta \in B_n \}$
\end{defn}

\section{\bf Proof of the Theorem}

\begin{proof}[Proof of 1)]
Let $\alpha$ be an element of $C^d(X : \Z/2\Z)$ which realizes the minimum in $h_d$. We can find a cochain $f$ in $C^d(X : \R)$, which assigns 1 to every $d$-cells in $\supp \alpha$ and $0$ to all the others. Since $X$ is orientable, $\partial_d \alpha$ is equivalent to $\partial_d f$. Then,
\begin{align*}
h_d 	& = \frac{\| \partial_d \alpha\|}{\| \alpha \|}\\
 	& = \frac{\| \partial_d f\|_2^2}{\| f \|_2^2} \\
	&\geq \min\left\{ \frac{\| \partial_d g\|^2}{\| g \|^2} : g \in C^d(X : \R), g \notin B_d = 0 \right\} \\
	& =  \lambda_d.
\end{align*}
\end{proof}

\begin{proof}[Proof of 2)]
Let $f$ be a real cochain which is an eigenvector of $\Delta_d^-$ of eigenvalue $\lambda_d$. We chose an orientation on the $d$-cells such that all the values of $f$ are positive. We do not assume that they are similarly oriented. We put an order on $X^d=\{ e_1^d, e_2^d, \ldots, e_N^d \}$ such that 
\begin{equation*}
0 \leq f(e_1^d) \leq f(e_2^d) \leq \ldots \leq f(e_N^d).
\end{equation*}
The boundary of $X$ is the $(d-1)$-cells with degree $1$,
\begin{equation*}
\partial X := \{ e_\lambda^{d-1} \in X^{d-1} : \deg e_\lambda^{d-1}  = 1 \}
\end{equation*}
For each $e_\lambda^{d-1}$ in $\partial X$, we add another $(d-1)$-cell ,$e_{\lambda'}^{d-1}$, via the attaching map $\varphi_{\lambda'} = \varphi_\lambda$. We can add a $d$-cell on $X$, whose attaching map goes homeomorphically into $e_\lambda^{d-1} \cup e_{\lambda'}^{d-1}$. We denote by $X_\partial^d$ the set of these new $d$-cells and put an order $X_\partial^d = \{e_{0}^d, e_{-1}^d,\ldots, e_{1-M}^d \}$ on it. The function $f$ is defined as $f=0$ on the cells of $X_\partial^d$. When two $d$-cells have a common $(d-1)$-cell in their boundary, we say they are \emph{low adjacent} and write $e_\lambda^d \sim e_\lambda^d$. It is possible that there are more than one $(d-1)$-cells in the intersection of the boundary of two $d$-cells. We say that we count the cells that realize $e_\lambda^d \sim e_\lambda^d$ with \emph{multiplicity} in this case, i.e. the pair $\{e_j^d, e_k^d \}$ appears a number of time equal of the number of common $(d-1)$-cells in their boundary. We define
\begin{equation*}
C_i:=\left\{ \{e_j^d, e_k^d \} : 1-M \leq j \leq i < k \leq N \text{ and } e_j^d \sim e_k^d   \right\}
\end{equation*} 
counted with multiplicity. Consider the quantity
\begin{equation*}
H[f]:=\min_{0 \leq i \leq N-1} \cfrac{|C_i|}{N-i}. 
\end{equation*}
We can show that $H[f] \geq h_d$. Indeed, let $\mathbf{i}$ be the $i$ which realizes the minimum of $H[f]$ and $\alpha \in C(X:\Z/2\Z)$ defined as follow,
\begin{equation*}
\alpha(e_k^d)= 
\begin{cases}
1 & \mathbf{i}<k	\\
0 & \mathbf{i} \geq k 
\end{cases}.
\end{equation*}
So the $(d-1)$-cells that are in the support of $\partial_d \alpha$ are in the boundary of one $e_k^d$ with $k>i$ and another with $k \leq i$. Then, we have
\begin{align*}
H[f]=\frac{|C_\mathbf{i}|}{N-\mathbf{i}}=\frac{\|\partial_d \alpha\|}{\|\alpha\|} \geq h_d.
\end{align*}
We can now prove our inequality. All the sums on $d$-cells are on $X^d \cup X_\partial^d$ and are taken with multiplicity.
\begin{align}
\lambda_d 	&=	\frac{\|\partial_d f \|_2^2}{\|f\|_2^2}	\nonumber \\
			&= 	\frac{\sum_\mu \partial_d f(e_\mu^{d-1})^2}{\sum_\lambda f(e_\lambda^d)^2}	\nonumber\\
			&=	\frac{\sum_{e_i^d \sim e_j^d} \left(f(e_i^d\right) \pm f(e_j^d))^2}{\sum_\lambda f(e_\lambda^d)^2} \cdot \frac{\sum_{e_i^d \sim e_j^d}\left(f(e_i^d) \mp f(e_j^d)\right)^2}{\sum_{e_i^d \sim e_j^d}\left(f(e_i^d) \mp f(e_j^d)\right)^2}	\label{boundary}\\ 
			&	\geq \frac{\left(\sum_{e_i^d \sim e_j^d}|f(e_i^d)^2-f(e_j^d)^2| \right)^2}{\left(\sum_\lambda f(e_\lambda^d)^2 \right) \cdot \left(\sum_{e_i^d \sim e_j^d}(f(e_i^d) \mp f(e_j^d))^2 \right)}	\label{cauchy_schwartz}\\
			&\geq \frac{\left( \sum_{e_i^d \sim e_j^d}|f(e_i^d)^2-f(e_j^d)^2| \right)^2}{\left(\sum_\lambda f(e_\lambda^d)^2 \right) \cdot 2 \left(\sum_{e_i^d \sim e_j^d}f(e_i^d)^2 + f(e_j^d)^2 \right)} \nonumber\\
			&\geq \frac{\left( \sum_{e_i^d \sim e_j^d}|f(e_i^d)^2-f(e_j^d)^2| \right)^2}{\left(\sum_\lambda f(e_\lambda^d)^2 \right) \cdot \left( 2 m \sum_\lambda f(e_\lambda^d)^2 \right)}	\nonumber\\
			&= \frac{\left( \sum_{i=0}^{N-1}(f(e_{i+1}^d)^2-f(e_i^d)^2) |C_i|\right)^2}{2m \left(\sum_\lambda f(e_\lambda^d)^2 \right)^2} \label{Ci}	\\
			&\geq \frac{\left( \sum_{i=0}^{N-1}(f(e_{i+1}^d)^2-f(e_i^d)^2) H[f](N-i)\right)^2}{2m \left(\sum_\lambda f(e_\lambda^d)^2 \right)^2} \nonumber	\\
			&= \frac{H[f]^2}{2m} \cdot \frac{\left( \sum_\lambda f(e_\lambda^d)^2 \right)^2}{\left( \sum_\lambda f(e_\lambda^d)^2 \right)^2}	\nonumber\\
			& \geq \frac{h_d^2(X)}{2m} \nonumber.
\end{align}
The equality \eqref{boundary} is a consequence of $f|_{X^d_\partial}=0$ and $\partial_d f =0$ for a $(d-1)$-cell of degree $0$ and   \eqref{cauchy_schwartz} follows from Cauchy-Schwartz. For \eqref{Ci}, we want to show that 
\begin{equation*}
\sum_{e_i^d \sim e_j^d}|f(e_i^d)^2-f(e_j^d)^2| = \sum_{i=0}^{N-1}(f(e_{i+1}^d)^2-f(e_i^d)^2) |C_i|.
\end{equation*}
This can be seen by counting the number of times each $f(e_i^d)^2$ appears on each side. On the left, $f(e_i^d)^2$ appears 
\begin{align*}
\Sigma_i \coloneqq & \left| \left\{ \{e_i^d, e_j^d \} : j<i \text{ and } e_i^d \sim e_j^d \text{ with multiplicity}\} \right\} \right| \\*
-& \left| \left\{\{e_i^d, e_k^d \} : i<k \text{ and } e_i^d \sim e_k^d \text{ with multiplicity}\} \right\}\right|.
\end{align*}
On the other side, each $f(e_i^d)^2$ appears $|C_{i-1}| - |C_{i}|$ times. Remark that for $j<k$ such that $\{e^d_j,e_k^d \}$ is in $C_{i-1}$, $\{e^d_j,e_k^d \}$ is also in $C_i$ if $k \neq i$.  Similarly, $\{e^d_j,e_k^d \}$ in $C_{i}$ is also in $C_{i-1} $ if $j\neq i$. Then,
\begin{equation*}
|C_{i-1}|-|C_i| = \Sigma_i
\end{equation*}
\end{proof}

\begin{center}{\textbf{Acknowledgments}}
\end{center}
The author is thankful to P. de la Harpe and T. Nagnibeda for their useful comments and careful reading of previous versions of this text.  He is also grateful to the anonymous referees for their valuable remarks.
\vskip 0.4 true cm


\bibliography{expenseurs.bib}
\bibliographystyle{abbrv}

\bigskip
\bigskip

{\footnotesize \pn{\bf Grégoire Schneeberger}\; \\ {Section de mat\'ematiques}, {University of Geneva}\\
{\tt Email: gregoire.schneeberger@unige.ch}\\
\end{document}